\numberwithin{equation}{section}
\definecolor{crimsonred}{RGB}{132,22,23}
\definecolor{darkblue}{RGB}{72,61,139}
\theoremstyle{plain}
\newtheorem{theorem}{Theorem}
\newtheorem{lemma}{Lemma}
\newtheorem{corollary}{Corollary}
\theoremstyle{remark}
\DeclareMathOperator{\hausdim}{\dim_{\mathbf{H}}}
\DeclareMathOperator{\lowminkdim}{\underline{\dim}_{\mathbf{M}}}
\DeclareMathOperator{\upminkdim}{\overline{\dim}_{\mathbf{M}}}
\DeclareMathOperator{\RR}{\mathbf{R}}
\DeclareMathOperator{\ZZ}{\mathbf{Z}}
\DeclareMathOperator{\Prob}{\mathbf{P}}
\DeclareMathOperator{\Expect}{\mathbf{E}}
\DeclareMathOperator{\B}{\mathcal{B}}
\title{Large Sets Avoiding Rough Patterns}
\author{Jacob Denson\thanks{University of British Columbia, Vancouver BC, \{denson, malabika, jzahl\}@math.ubc.ca.} \and Malabika Pramanik\footnotemark[1] \and Joshua Zahl\footnotemark[1]}
\begin{document}

\maketitle

\begin{abstract}
	The pattern avoidance problem seeks to construct a set $X\subset \RR^d$ with large dimension that avoids a prescribed pattern. Examples of such patterns include three-term arithmetic progressions (solutions to $x_1 - 2x_2 + x_3 = 0$), or more general patterns of the form $f(x_1, \dots, x_n) = 0$. Previous work on the subject has considered patterns described by polynomials, or by functions $f$ satisfying certain regularity conditions. We consider the case of `rough' patterns, not necessarily given by the zero-set of a function with prescribed regularity.

	There are several problems that fit into the framework of rough pattern avoidance. As a first application, if $Y \subset \RR^d$ is a set with Minkowski dimension $\alpha$, we construct a set $X$ with Hausdorff dimension $d-\alpha$ such that $X+X$ is disjoint from $Y$. As a second application, if $C$ is a Lipschitz curve, we construct a set $X \subset C$ of dimension $1/2$ that does not contain the vertices of an isosceles triangle.
\end{abstract}

A major question in modern geometric measure theory is whether sufficiently large sets are forced to contain copies of certain patterns. Intuitively, one expects the answer to be yes, and many results in the literature support this intuition. For example, the Lebesgue density theorem implies that a set of positive Lebesgue measure contains an affine copy of every finite set. If $X \subset \RR^d$ has large Hausdorff dimension, then it must contain many points that lie in a lower dimensional plane section (see e.g. \cite[Thm 6.8]{Matilla}). On the other hand, there is a distinct genre of results that challenges this intuition. Keleti \cite{KeletiDimOneSet}  constructs a set $X \subset \RR$ that avoids all solutions of the equation $x_2 - x_1 = x_4 - x_3$ with $x_1 < x_2 \leq x_3 < x_4$, and which consequently does not contain any nontrivial arithmetic progression. Maga \cite{Maga} constructs a set $X \subset \RR^2$ of full Hausdorff dimension such that no four points in $X$ form the vertices of a parallelogram. The pattern avoidance problem (informally stated) asks: for a given pattern, how large can the dimension of a set $X\subset\RR^d$ be before it is forced to contain a copy of this pattern? 

One way to formalize the notion of a pattern is as follows. If $d\geq 1$ and $n\geq 2$ are integers, we define a pattern to be a set $Z\subset \RR^{dn}$. We say that a set $X\subset\RR^d$ avoids the pattern $Z$ if for every $n$-tuple of distinct points $x_1,\ldots,x_n\in X$, we have $(x_1,\ldots,x_n)\not\in Z$. For example, a set $X\subset\RR^d$ avoids the pattern 
$$
Z = \{(x_1,x_2,x_3)\in\RR^{3d}\colon |(x_1-x_2)\wedge (x_1-x_3)|=0\}
$$ 
if and only if it does not contain three collinear points. Here $u \wedge v$ denotes the wedge product of $u$ and $v$; its length specifies the area of the parallelogram with sides $u$ and $v$. This length vanishes if and only if $u$ and $v$ are parallel. Similarly, a set $X\subset\RR^2$ avoids the pattern 
$$
Z=\{(x_1,x_2,x_3,x_4)\in\RR^{8}\colon |(x_1-x_2)\wedge (x_3-x_4)|=0, \quad |x_1 - x_2| = |x_3 - x_4| \}
$$ 
if and only if no four points in $X$ form the vertices of a parallelogram. 

A number of recent articles have established pattern avoidance results for increasingly general patterns. In \cite{Mathe}, M\'{a}th\'{e} constructs a set $X\subset\RR^d$ that avoids a pattern specified by a countable union of algebraic varieties of controlled degree. In \cite{MalabikaRob}, Fraser and the second author consider the pattern avoidance problem for countable unions of $C^1$ manifolds.

In this paper, we consider the pattern avoidance problem for `rough' patterns $Z\subset\RR^{dn}$ that are the countable union of sets with controlled lower Minkowski dimension. % at most $\alpha<dn$. In contrast with previous work, no further assumptions are made on $Z$. The dimension of the avoiding set $X$ that we construct depends only on the number of variables $n$ and the codimension $nd - \alpha$ of $Z$. 

\begin{theorem}\label{mainTheorem}
	Let $d \leq \alpha < dn$ and let $Z \subset \RR^{dn}$ be a countable union of compact sets, each with lower Minkowski dimension at most $\alpha$. Then there exists a set $X \subset [0,1)^d$ with Hausdorff dimension at least $(nd - \alpha)/(n-1)$ such that whenever $x_1, \dots, x_n \in X$ are distinct, we have $(x_1, \dots, x_n) \not \in Z$.
\end{theorem}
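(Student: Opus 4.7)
I would prove Theorem~\ref{mainTheorem} via a Cantor-type construction. Write $Z = \bigcup_k Z_k$ with each $Z_k$ compact and $\lowminkdim Z_k \leq \alpha$, and set $s = (nd-\alpha)/(n-1) \in (0,d]$. I would build a decreasing sequence of collections $E_j$ of closed $\delta_j$-cubes contained in $[0,1)^d$, with $\delta_j \downarrow 0$ chosen very rapidly, and take $X = \bigcap_j \bigcup_{Q \in E_j} Q$. The construction will maintain, at each stage $j$, two invariants: (a) $|E_j| \geq \delta_j^{-\sigma_j}$ for a sequence $\sigma_j \uparrow s$, with the cubes of $E_j$ distributed roughly evenly among their parents in $E_{j-1}$; and (b) for each $k \leq j$ and every $n$-tuple $(Q_1,\ldots,Q_n)$ of pairwise distinct cubes in $E_j$, the product $Q_1\times\cdots\times Q_n$ is disjoint from $Z_k$. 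These invariants yield, respectively, $\hausdim X \geq s$ (via a mass-distribution / Frostman argument on the uniform measure) and the pattern-avoidance conclusion: any $n$ distinct points $x_1,\ldots,x_n \in X$ lie in $n$ distinct cubes of $E_j$ once $\delta_j < \min_{i\neq \ell}|x_i - x_\ell|$, and each $Z_k$ gets handled from stage $k$ onward, so $(x_1,\ldots,x_n) \notin Z_k$ for all $k$.

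The heart of the proof is the inductive step from $E_j$ to $E_{j+1}$. Choose parameters $\varepsilon_{j+1} > 0$ small and $T_{j+1} > 1$ large, and set $\delta_{j+1} = \delta_j^{T_{j+1}}$. Subdivide each parent $Q \in E_j$ into its $(\delta_j/\delta_{j+1})^d$ sub-cubes of side $\delta_{j+1}$, producing a vertex set $V$ of candidate cubes. Call an $n$-tuple $(q_1,\ldots,q_n)$ of distinct elements of $V$ a \emph{bad tuple} if its product in $\RR^{dn}$ meets $Z_k$ for some $k \leq j+1$. The lower Minkowski dimension hypothesis gives, for $\delta_{j+1}$ small enough depending on $\varepsilon_{j+1}$, a covering of each $Z_k$ by at most $\delta_{j+1}^{-\alpha-\varepsilon_{j+1}}$ axis-aligned cubes of side $\delta_{j+1}$ in $\RR^{dn}$; each such covering cube projects to at most one $n$-tuple of $\RR^d$-cubes, so the total count of bad tuples satisfies $B \lesssim (j+1)\,\delta_{j+1}^{-\alpha-\varepsilon_{j+1}}$. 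Viewing $(V,\text{bad tuples})$ as an $n$-uniform hypergraph and applying the standard probabilistic-deletion bound (select each vertex independently with an optimized probability, then remove one endpoint of each surviving hyperedge) produces an independent set of size $\gtrsim |V|^{n/(n-1)} B^{-1/(n-1)}$. Substituting $|V| \geq \delta_j^{-\sigma_j}(\delta_j/\delta_{j+1})^d$ and the bound on $B$, and using the key identity $(nd-\alpha)/(n-1)=s$, one obtains
\[
|E_{j+1}| \;\gtrsim\; \delta_j^{\,n(d-\sigma_j)/(n-1)}\,\delta_{j+1}^{-s + \varepsilon_{j+1}/(n-1)}.
\]
The exponent $-s$ on $\delta_{j+1}$ is precisely what the calculation forces, explaining the sharp value of $s$ in the theorem. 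Taking $\varepsilon_{j+1} \to 0$ and $T_{j+1} \to \infty$ rapidly enough renders the prefactor $\delta_j^{\,n(d-\sigma_j)/(n-1)}$ logarithmically negligible, allowing us to choose $\sigma_{j+1}$ with $\sigma_{j+1} \uparrow s$.

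The principal obstacle I foresee is the regularity half of invariant (a): a raw hypergraph independent set need not distribute its vertices evenly across the parent cubes, which would break the Frostman upper bound on the natural measure on $X$ and spoil the dimension estimate. I would handle this either by running the probabilistic selection independently inside each parent cube (treating "within-parent" bad tuples locally) and then separately pruning the relatively few bad tuples that straddle multiple parents, or by invoking a Lovász local lemma style argument that enforces both the independent-set property and per-parent uniformity simultaneously. The very large scale separation $T_{j+1}$ available at each stage provides ample room to absorb the constant-factor losses that either strategy entails.
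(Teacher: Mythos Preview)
Your outline matches the paper's strategy: a Cantor-type iteration, probabilistic selection-then-deletion at each stage to kill bad $n$-tuples counted via the Minkowski-dimension bound, and a Frostman measure for the Hausdorff-dimension lower bound. The numerology you display is exactly the one driving the paper.

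The one place where the paper is sharper is precisely the regularity obstacle you flag. Rather than Bernoulli selection followed by ad hoc uniformity repairs, the paper introduces an \emph{intermediate dyadic scale} $r_k\in[l_k,l_{k-1}]$, chosen of order $(l_{k-1}^{-d}l_k^{dn}\#\B_{l_k}^{dn}(Z_k))^{1/(d(n-1))}$, and from each $r_k$-cube inside $X_{k-1}$ selects a \emph{single} $l_k$-subcube uniformly at random. The resulting set $U$ is automatically non-concentrated---exactly one $l_k$-cube per $r_k$-cube---hence already regular at every scale between $r_k$ and $l_{k-1}$. The expected number of surviving bad $n$-tuples is $\#\B_{l_k}^{dn}(Z_k)\cdot(l_k/r_k)^{dn}$, and $r_k$ is tuned so that this is at most $\tfrac12(l_{k-1}/r_k)^d$; deleting one endpoint per bad tuple then removes at most half the selected cubes from any $l_{k-1}$-parent, while the non-concentration at scale $r_k$ survives untouched. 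This single device delivers the independent set and the per-parent uniformity simultaneously, so no local-lemma argument or local/global split is needed; the Frostman bound then follows by a two-case analysis according to whether the test scale $l$ lies in $[r_{k+1},l_k]$ or $[l_{k+1},r_{k+1}]$. A minor organizational difference: the paper avoids only one piece $Z_k$ at stage $k$, arranging the pieces into a sequence that strongly covers $Z$ (each compact piece recurs infinitely often), which spares even the harmless factor $(j+1)$ in your bad-tuple count.
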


{\em{Remarks: }} 
\begin{enumerate}[1.]
\item	When $\alpha < d$, the pattern avoidance problem is trivial, since $X = [0,1)^d - \pi(Z)$ is full dimensional and solves the pattern avoidance problem, where $\pi(x_1, \dots, x_n) = x_1$ is a projection map from $\RR^{dn}$ to $\RR^d$. The case $\alpha = dn$ is trivial as well, since we can set $X = \emptyset$.

\item When $Z$ is a countable union of smooth manifolds in $\mathbb R^{nd}$ of co-dimension $m$, we have $\alpha = nd - m$. In this case Theorem \ref{mainTheorem} yields a set in $\mathbb R^d$ of dimension $(nd - \alpha)/(n-1) = m/(n-1)$. This recovers Theorem 1.1 and 1.2 from \cite{MalabikaRob}, making Theorem \ref{mainTheorem} a generalization of the same. Since Theorem \ref{mainTheorem} does not require any regularity assumptions on the set $Z$, the current paper considers the pattern avoidance problem in contexts that cannot be addressed using previous methods. Two such applications, new to the best of our knowledge, have been recorded in Section \ref{applications}; see Theorems \ref{sumset-application} and \ref{C1IsoscelesThm} there.

\item The set $X$ in Theorem \ref{mainTheorem} is obtained by constructing a sequence of approximations to $X$, each of which avoids the pattern $Z$ at different scales. For a sequence of lengths $l_k \searrow 0$, we construct a nested family of sets $\{X_k\}$, where $X_k$ is a union of cubes of sidelength $l_k$ that avoids $Z$ at scales close to $l_n$. The set $X=\bigcap X_k$ avoids $Z$ at all scales. While this proof strategy is not new, our method for constructing the sets $\{X_k\}$ has several innovations that simplify the analysis of the resulting set $X=\bigcap X_k$. In particular, through a probabilistic selection process we are able to avoid the complicated queuing techniques used in \cite{KeletiDimOneSet} and \cite{MalabikaRob}, that required storage of data from each step of the iterated construction, to be retrieved at a much later stage of the construction process.

At the same time, our construction continues to share certain features with \cite{MalabikaRob}. For example, between each pair of scales $l_{k-1}$ and $l_{k}$, we carefully select an intermediate scale $r_{k}$. The set $X_{k}\subset X_{k-1}$ avoids $Z$ at scale $l_{k}$, and it is `evenly distributed' at scale $r_k$: the set $X_{k}$ is a union of intervals of length $l_{k}$ whose midpoints resemble (a large subset of) an arithmetic progression of step size $r_k$. The details of a single step of this construction are described in Section \ref{discretesection}. In Section \ref{discretizationsection}, we explain how the length scales $l_k$ and $r_k$ for $X$ are chosen, and prove its avoidance property. In Section \ref{dimensionsection} we analyze the size of $X$ and show that it satisfies the conclusions of Theorem \ref{mainTheorem}.

\end{enumerate}

\section{Frequently Used Notation and Terminology}\label{notationSection}

% DISCUSS: DOES THE SECTION PARTITIONING HELP?

\begin{enumerate}%[label=\Alph*]
	\item A {\it dyadic length} is a number $l$ of the form $2^{-k}$ for some non-negative integer $k$.

	\item Given a length $l > 0$, we let $\B^d_l$ denote the set of all half open cubes in $\RR^d$ with sidelength $l$ and corners on the lattice $(l \cdot \ZZ)^d$, i.e.
	\[ \B^d_l = \{ [a_1, a_1 + l] \times \cdots \times [a_d, a_d+l] : a_k \in l \cdot \ZZ \}. \]
	If $E \subset \RR^d$, we let $\B^d_l(E)$ denote the set of cubes in $\B^d_l$ intersecting $E$, i.e.
	\[ \B^d_l(E) = \{ I \in \B^d_l: I \cap E = \emptyset \}. \]

	\item\label{defnMinkowskiDim} The {\it lower} and {\it upper Minkowski dimension} of a compact set $Z \subset \RR^d$ are defined as
	%
	%	\begin{equation} \label{minkdimdef}
	\[		\lowminkdim(Z) = \liminf_{l \to 0} \frac{\log(\# \B^d_l(Z))}{\log(1/l)}\quad \text{and}\quad \upminkdim(Z) = \limsup_{l \to 0} \frac{\log(\# \B^d_l(Z))}{\log(1/l)}. \]
	%	\end{equation}
	%If $\lowminkdim(Z) = \upminkdim(Z)$, then we define $\minkdim(Z)$ equal to the common value.

	\item If $0 \leq \alpha$ and $\delta > 0$, we define the dyadic Hausdorff content of a set $E\subset\RR^d$ as 
	\[ H^\alpha_\delta(E) = \inf \left\{ \sum_{k = 1}^m l_k^\alpha : E \subset \bigcup_{k = 1}^m I_k\ \text{and}\ I_k \in \B^d_{l_k}, l_k \leq \delta\ \text{for all $k$} \right\}. \]
	The $\alpha$-dimensional dyadic Hausdorff measure $H^\alpha$ on $\RR^d$ is $H^\alpha(E) = \lim_{\delta \to 0} H_\delta^\alpha(E)$, and the {\it Hausdorff dimension} of a set $E$ is $\hausdim(E) = \inf \{ \alpha \geq 0 : H^\alpha(E) = 0 \}$.

	\item \label{stronglyNonDiagonalDef}Given $I \in \B^{dn}_l$, we can decompose $I$ as $I_1 \times \cdots \times I_n$ for unique cubes $I_1, \dots, I_n \in \B_l^d$. We say $I$ is {\it strongly non-diagonal} if the cubes $I_1, \dots, I_n$ are distinct. Strongly non-diagonal cubes will play an important role in Section \ref{discretesection}, when we solve a discrete version of Theorem \ref{mainTheorem}.

	\item\label{strongCoverDefn} Adopting the terminology of \cite{KatzTao}, we say a collection of sets $\{ U_k \}$ is a {\it strong cover} of a set $E$ if $E \subset \limsup U_k$, which means every element of $E$ is contained in infinitely many of the sets $U_k$. This idea will be useful in Section \ref{discretizationsection}.  

	\item\label{frostmanItem} A {\it Frostman measure} of dimension $\alpha$ is a non-zero compactly supported probability measure $\mu$ on $\RR^d$ such that for every cube $I$ of sidelength $l$, $\mu(I) \lesssim l^\alpha$. Note that a measure $\mu$ satisfies this inequality for every cube $I$ if and only if it satisfies the inequality for cubes whose sidelengths are dyadic lengths. {\it Frostman's lemma} says that
	\begin{equation}  \hausdim(E) = \sup \left\{ \alpha: 
	\begin{aligned} 
	&\text{there is a Frostman measure of}\\
	&\text{dimension $\alpha$ supported on $E$} 
	\end{aligned} 
	\right\}.  \label{Hdim-defn} \end{equation} 
\end{enumerate}

\section{Avoidance at Discrete Scales}\label{discretesection}

In this section we describe a method for avoiding $Z$ at a single scale. We apply this technique in Section \ref{discretizationsection} at many scales to construct a set $X$ avoiding $Z$ at all scales. This single scale avoidance technique is the building block of our construction, and the efficiency with which we can avoid $Z$ at a single scale has direct consequences on the Hausdorff dimension of the set $X$ obtained in Theorem \ref{mainTheorem}.

At a single scale, we solve a discretized version of the problem, where all sets are unions of cubes at two dyadic lengths $l \geq s$ (later, we will choose $l=l_n$ and $s=l_{n+1}$). Given a set $E \subseteq [0,1)^d$ that is a union of cubes in $\B_l^d$, our goal is to construct a set $F\subset E$ that is a union of cubes in $\B_s^d$ such that $F^n$ is disjoint from the strongly non-diagonal cubes of $\B_{s}^{dn}(Z)$.

In order to ensure the final set $X$ obtained in Theorem \ref{mainTheorem} has large Hausdorff dimension regardless of the rapid decay of scales used in the construction of $X$, it is crucial that $F$ is uniformly distributed at intermediate scales between $l$ and $s$. This is the `non-concentration' property discussed below. The next lemma constructs a set $F$ with these properties. 

\begin{lemma} \label{discretelemma}
	Fix two distinct dyadic lengths $l$ and $s$, $l > s$. Let $E \subseteq [0, 1)^d$ be a nonempty union of cubes in $\B^d_l$, and let $G\subset\RR^{dn}$ be a nonempty union of cubes in $\B_s^{dn}$ such that 
	\begin{equation}\label{ZsLarge}
	(l/s)^d \leq \# \B^{dn}_s(G)  \leq \frac{1}{2}(l/s)^{dn}.
	\end{equation} 
	Then there exists a dyadic length $r \in [s,l]$ of size
	\begin{equation} \label{rBound}
	 	r \sim \big(l^{-d}s^{dn} \# \B^{dn}_s(G)\big)^{\frac{1}{d(n-1)}},
	 \end{equation}
	 and a set $F \subset E$ that is a nonempty union of cubes in $\B^d_s(E)$ satisfying the following three properties:
	\begin{enumerate}
		\item\label{avoidanceItem} \emph{Avoidance}: For any choice of distinct cubes $J_1, \dots, J_n \in \B^d_s(F)$, $J_1 \times \dots \times J_n \not \in \B_s^{dn}(G)$.
		\item\label{nonConcentrationItem} \emph{Non-Concentration}: For every $I' \in \B_r^d(E)$, there is at most one $J \in \B_s^d(F)$ with $J \subset I'$.
		\item\label{largeSizeItem} \emph{Large Size}: For every $I \in \B^d_l(E)$, $\# \B^d_s(F \cap I) \geq \# \B^d_r(I) / 2 = (l/r)^d / 2$.
	\end{enumerate}
	\end{lemma}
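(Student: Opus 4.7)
The plan is to construct $F$ via random selection followed by an alteration step. Fix $r$ to be a dyadic length in $[s,l]$ consistent with (10), where the implicit constant is chosen sufficiently large in terms of $n$ (compatibility with the constraint $r \leq l$ and the hypothesis on $\#\B_s^{dn}(G)$ will be addressed below). For each $r$-cube $I' \in \B_r^d(E)$, independently pick one $s$-cube uniformly at random from the $(r/s)^d$ $s$-cubes contained in $I'$, and let $F_0$ be the union of the selections. By construction, every $r$-cube of $E$ contains exactly one $s$-cube of $F_0$, so (ii) holds; moreover $\#\B_s^d(F_0 \cap I) = (l/r)^d$ for every $I \in \B_l^d(E)$, a stronger form of (iii).

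To enforce avoidance (i), define $F$ by removing from $F_0$ every $s$-cube that participates in some strongly non-diagonal bad tuple in $F_0^n$. A strongly non-diagonal tuple $(J_1,\ldots, J_n) \in \B_s^{dn}(G)$ can appear in $F_0^n$ with positive probability only when its coordinates lie in $n$ distinct $r$-cubes, in which case independence of the selections yields $\Prob(J_1,\ldots, J_n \in F_0) = (s/r)^{dn}$. Summing over such tuples and using (10),
\[
\Expect\bigl[\#\{\text{bad tuples in }F_0^n\}\bigr] \leq (s/r)^{dn}\,\#\B_s^{dn}(G) = \frac{(l/r)^d}{K},
\]
where $K$ denotes the $d(n-1)$-th power of the implicit constant in (10). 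Since each bad tuple accounts for at most $n$ deletions, the discarded set $D = F_0 \setminus F$ satisfies $\Expect[|D|] \leq n (l/r)^d/K$.

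Choosing $K \geq 4n$, Markov's inequality gives $|D| \leq (l/r)^d/2$ with probability at least $1/2$; on that event, $|D \cap I| \leq |D| \leq (l/r)^d/2$ simultaneously for every $I \in \B_l^d(E)$, so (iii) is met, while (i) and (ii) are preserved by construction. The main obstacle is the joint compatibility of $K \geq 4n$ with the requirement $r \leq l$, which is feasible only if $\#\B_s^{dn}(G) \lesssim (l/s)^{dn}/n$. In the complementary regime $\#\B_s^{dn}(G) \in ((l/s)^{dn}/(4n), (l/s)^{dn}/2]$, one takes $r = l$ (so $(l/r)^d = 1$) and applies Markov directly to the number of bad tuples: $\Expect[\#\{\text{bad tuples in }F_0^n\}] \leq (s/l)^{dn} \cdot \frac{1}{2}(l/s)^{dn} = 1/2$, whence $F_0$ already satisfies (i) with probability at least $1/2$, and $F = F_0$ then satisfies all three conditions.
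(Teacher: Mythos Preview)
Your argument is correct and follows the same random-selection-plus-deletion strategy as the paper. The one notable difference is that you delete \emph{every} coordinate of each bad tuple, incurring a factor of $n$ in the deletion count and forcing the case split at the end; the paper instead deletes only the first coordinate $\pi(K)=J_1$ of each bad tuple $K=J_1\times\cdots\times J_n$, which already kills $K$ (and any other bad tuple sharing that first coordinate), so the number of deletions is bounded by $\#\mathcal K(U)$ itself rather than $n\cdot\#\mathcal K(U)$. With that refinement one can take the constant in \eqref{rBound} to be $2^{1/(d(n-1))}$, the upper bound in \eqref{ZsLarge} gives $R\le l$ directly, and no separate $r=l$ regime is needed. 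Otherwise the arguments are interchangeable (your use of Markov in place of ``some realization meets the mean'' is cosmetic).
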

	{\em{Remark: }} Item \ref{avoidanceItem} says that $F$ avoids strongly non-diagonal cubes in $\B^{dn}_s(G)$. Items \ref{nonConcentrationItem} and \ref{largeSizeItem} together imply that for every $I \in \B^d_l(E)$, at least half the cubes $I'\in \B_r^d(I)$ contribute a single sub-cube of sidelength $s$ to $F$; the rest contribute none.  
	%contains a single cube of sizelength $s$ inside of $I$. 

\begin{proof}
Let $r$ be the smallest dyadic length satisfying 
\begin{equation} \label{What-is-r}
r\geq R := \big(2 l^{-d}s^{dn}\# \B^{dn}_s(G)\big)^{\frac{1}{d(n-1)}}.
%r\geq\max\Big(s,\ \big(l^{-d}s^{dn}\# \B^{dn}_s(G)\big)^{\frac{1}{d(n-1)}}\Big).
\end{equation} 
%By the first inequality in \eqref{ZsLarge}, 
This choice of $r$ satisfies \eqref{rBound}. 
%Define $A_l = (2^{1/d}/l)^{1/(n-1)}.$ By the second inequality in \eqref{ZsLarge}, we have
	%
%	\[ A_l (s^{dn}\#\B^{dn}_s(G))^{1/d(n-1)} \leq A_l l^{n/(n-1)} / 2^{1/d(n-1)} = l. \]
	%Since $l$ is a dyadic length, we conclude that $r\leq l$ and thus 
	The inequalities in \eqref{ZsLarge} ensure that $r \in [s,l]$; more precisely, the left inequality in \eqref{ZsLarge} implies $R$ is bounded from below by $s$, proving $r \geq s$. On the other hand, the right inequality in \eqref{ZsLarge} shows that $R$ bounded from above by $l$. The minimality of $r$ then proves that $r \leq l$.  

	For each $I' \in \B_r^d(E)$, let $J_{I'}$ be an element of $\B^d_s(I)$ chosen uniformly at random; these choices are independent as $I'$ ranges over the elements of $\B_r^d(E)$. Define
	\[ 	U = \bigcup \left\{ J_{I'} :  I' \in \B_r^d(E) \right\}, \]
	and
	\[ \mathcal{K}(U) = \{ K \in \B^{dn}_s(G) : K \in U^n, \text{$K$ strongly non-diagonal} \}. \]
	Note that the sets $U$ and $\mathcal{K}(U)$ are random sets, in the sense that they are depend on the random variables $\{J_I\}$. 
	Define
	\begin{equation} \label{defnOfF}
		F_U = U - \{ \pi(K): K \in \mathcal{K}(U) \},
	\end{equation}
	where $\pi: \RR^{dn} \to \RR^d$ is the projection map $(x_1, \cdots, x_n) \in \mathbb R^{dn} \mapsto x_1 \in \mathbb R^d$. Thus $\pi$ sends the cube $K_1 \times \dots \times K_n\in \B^{dn}_s$ to the cube $K_1\in \B^{d}_s$. Given any strongly non-diagonal cube $J_1 \times \cdots \times J_n \in \B_s^{dn}(G)$, either $J_1 \times \cdots \times J_n \not \in \B_s^{dn}(U^n)$, or $J_1 \times \cdots \times J_n \in \B_s^{dn}(U^n)$. If the former occurs then $J_1 \times \cdots \times J_n \not \in \B_s^{dn}(F_U^n)$ since $F_U \subset U$, while if the latter occurs then $K \in \mathcal{K}(U)$, so $J_1 \not \in \B_s^d(F_U)$. In either case, $J_1 \times \cdots \times J_n \not \in \B_s^{dn}(F_U^n)$, so $F_U$ satisfies Property \ref{avoidanceItem}. By construction, $U$ contains at most one subcube $J \in \B^{dn}_s$ for each $I \in \B^{dn}_l(E)$. Since $F_U \subset U$, $F_U$ satisfies Property \ref{nonConcentrationItem}. Thus the set $F_U$ satisfies Properties \ref{avoidanceItem} and \ref{nonConcentrationItem} regardless of which values are assumed by the random variables $\{J_I\}$. Next we will show that with non-zero probability, the set $F_U$ satisfies Property \ref{largeSizeItem}. 

	For each cube $J \in \B_s^d(E)$, there is a unique `parent' cube $I' \in \B_r^d(E)$ such that $J \subset I'$. Since $I'$ contains $(r/s)^d$ elements of $\B^d_s(E)$, and $J_{I'}$ is chosen uniformly at random from $\B^d_s(I)$,
	%
%	\begin{equation} \label{singleCubeProb}
	\[ \Prob(J \subset U) = \Prob(J_{I'} = J) = (s/r)^d. \]
%	\end{equation}
	%
	%Here the probability measure $\Prob(\cdot)$ is taken with respect to the randomly chosen set $U$ defined in \eqref{Udefinition}.
	The cubes $J_I$ are chosen independently, so if $J_1, \dots, J_k$ are distinct cubes in $\B^d_s(E)$, then %the last calculation combined with Property \ref{nonConcentrationItem} shows that
	\begin{equation}\label{jointprob}
	\Prob(J_1, \dots, J_k \in U) = \begin{cases} (s/r)^{dk} & \text{if $J_1, \dots, J_k$ have distinct parents,} \\ 0 & \text{otherwise}. \end{cases} 
	\end{equation}
	Let $K = J_1 \times \dots \times J_n \in \B^{dn}_s(G)$. If the cubes $J_1, J_2, \cdots J_n$ are distinct, we deduce from \eqref{jointprob} that
	\begin{equation}\label{probaKSubsetUn}
		\Prob(K \subset U^n) = \Prob(J_1, \dots, J_n \in U) = (s/r)^{dn}.
	\end{equation}
	By \eqref{probaKSubsetUn} and linearity of expectation, we have 
	\begin{align*}
		\Expect(\# \mathcal{K}(U)) &= \sum_{K \in \B^{dn}_s(G)} \Prob(K \subset U^n) \\
		& \leq \# \B_s^{dn}(G) \cdot (s/r)^{dn} \\
	%	&= \left[ s^{dn}\#\B^{dn}_s(G) r^{-d(n-1)} \right] r^{-d} \\
	%	& \leq \left[ s^{dn}\#\B^{dn}_s(G) (A_l (s^{dn}\#\B^{dn}_s(G))^{1/d(n-1)})^{-d(n-1)} \right] r^{-d} \\
		&\leq \left[ l^d/2 \right] r^{-d},
		%& = (l/r)^d /2.
	\end{align*}
	where the last inequality is just a restatement of \eqref{What-is-r}. 
	In particular, there exists at least one (non-random) set $U_0$ such that
	\begin{equation}\label{KU0Small}
		\# \mathcal{K}(U_0) \leq \Expect(\# \mathcal{K}(U)) \leq \frac{1}{2}(l/r)^d.
	\end{equation}
	 In other words, $F_{U_0}\subset U_0$ is obtained by removing at most $\frac{1}{2}(l/r)^d$ cubes in $\B^d_s$ from $U_0$. For each $I \in \B_l^d(E)$, we know that 
\[ \# \B_{s}^d(I \cap U_0) = (l/r)^d. \] 	
Combining this with the previous observation, we arrive at the estimate 
\begin{align*}  \# \B_{s}^d(I \cap F_{U_0}) &= \# \B_{s}^d(I \cap F_{U_0}) - \# \B_{s}^d \bigl[ I \cap \pi(\mathcal K(U_0)) \bigr] \\  
&\geq \# \B_{s}^d(I \cap F_{U_0}) - \# \B_{s}^d (\pi(\mathcal K(U_0))) \\ 
& \geq  \# \B_{s}^d(I \cap F_{U_0}) - \# \B_{s}^d (\mathcal K(U_0)) \geq \frac{1}{2} (l/r)^d.  
\end{align*}  
	%Since $U_0$ contains $(l/r)^d$ cubes in $\B^d_s(I)$ for each $I \in \B^d_l(E)$, we conclude that $F_{U_0}$ contains at least $\frac{1}{2}(l/r)^d$ cubes in $\B^d_s(I)$ for each $I \in \B^d_l(E)$, so 
	In other words, $F_{U_0}$ satisfies Property \ref{largeSizeItem}. Setting $F = F_{U_0}$ completes the proof.
\end{proof}

{\em{Remarks: }} 
\begin{enumerate}[1.]
\item	While Lemma \ref{discretelemma} uses probabilistic arguments, the conclusion of the lemma is not a probabilistic statement. In particular, one can find a suitable $F$ constructively by checking every possible choice of $U$ (there are finitely many) to find one particular choice $U_0$ which satisfies \eqref{KU0Small}, and then defining $F$ by \eqref{defnOfF}. Thus the set we obtain in Theorem \ref{mainTheorem} exists by purely constructive means.

\item At this point, it is possible to understand the numerology behind the Hausdorff dimension bound $\dim_H(X) \geq (nd-\alpha)/(n-1)$ from Theorem \ref{mainTheorem}. We will pause to do so here before returning to the proof of Theorem \ref{mainTheorem}. For simplicity, suppose that $Z\subset\RR^{dn}$ satisfies 
\begin{equation}\label{specialCase}
\#\B_{s}^{dn}(Z)\leq Cs^{-\alpha}\quad  \textrm{for every}\ s\in(0,1],
\end{equation}
where $C>0$ is a fixed constant. Let $l=1$, let $E=[0,1)^d$, let $s>0$ be a small parameter. If $s$ is chosen sufficiently small compared to $d,n,\alpha,$ and $C$, then \eqref{ZsLarge} is satisfied, and we can apply Lemma \ref{discretelemma} to find a dyadic scale $r\sim s^{\frac{dn-\alpha}{d(n-1)}}$ and a set $F$ that avoids the strongly non-diagonal cubes of $B_{s}^{dn}(Z)$. The set $F$ is a union of approximately $r^{-d}\sim s^{-\frac{dn-\alpha}{n-1}}$ cubes of sidelength $s$. Informally, the set $F$ resembles a set with Minkowski dimension $\alpha$ when viewed at scale $s$. 

The set $X$ constructed in Theorem \ref{mainTheorem} will be obtained by applying Lemma \ref{discretelemma} iteratively at many scales. At each of these scales, $X$ will resemble a set of Minkowski dimension $\frac{dn-\alpha}{n-1}$. A careful analysis of the construction (performed in Section \ref{dimensionsection}) shows that $X$ actually has Hausdorff dimension at least $\frac{dn-\alpha}{n-1}$.

\item	Lemma \ref{discretelemma} is the core method in our avoidance technique. The remaining argument is fairly modular. If, for a special case of $Z$, one can improve the result of Lemma \ref{discretelemma} so that $r$ is chosen on the order of $s^{\beta/d}$, then the remaining parts of our paper can be applied near verbatim to yield a set $X$ with Hausdorff dimension $\beta$, as in Theorem \ref{mainTheorem}. 

\end{enumerate}

\section{Fractal Discretization}\label{discretizationsection}
In this section we will construct the set $X$ from Theorem \ref{mainTheorem} by applying Lemma \ref{discretelemma} at many scales. The goal is to find a nested decreasing family of discretized sets $\{ X_k \}$ and to set $X = \bigcap X_k$. One condition guaranteeing that $X$ avoids $Z$ is that $X_k^n$ is disjoint from {\it strongly non-diagonal} cubes in $Z_k$.

\begin{lemma} \label{stronglydiagonal}
	Let $Z \subset \RR^{dn}$, let $\{Z_k\}$ be a sequence of sets that strongly cover $Z$, and let $\{ l_k \}$ be a sequence of lengths converging to zero. For each index $k$, let $X_k$ be a union of cubes in $\B^d_{l_k}$. Suppose that for each $k$, $X_k^n$ avoids strongly non-diagonal cubes in $\B^{dn}_{l_k}(Z_k)$. Then for any distinct $x_1, \dots, x_n \in \bigcap X_k$, we have $(x_1, \dots, x_n) \not \in Z$.
\end{lemma}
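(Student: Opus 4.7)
The plan is to argue by contradiction. Suppose there exist distinct points $x_1,\dots,x_n \in \bigcap_k X_k$ with $(x_1,\dots,x_n) \in Z$. I will produce, for some index $k$, a strongly non-diagonal cube in $\B^{dn}_{l_k}(Z_k)$ whose factors all lie in $X_k$, contradicting the hypothesis.

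The first step is to exploit the strong cover condition. Since $Z \subset \limsup Z_k$, the point $(x_1,\dots,x_n)$ lies in $Z_k$ for infinitely many indices $k$. The second step is to use the separation of the $x_i$ together with $l_k \to 0$. Let $\delta = \min_{i\ne j} |x_i - x_j| > 0$. Whenever $l_k \sqrt{d} < \delta$, the diameter of any cube in $\B^d_{l_k}$ is strictly less than $\delta$, so the unique cubes $J_i \in \B^d_{l_k}$ containing $x_i$ must be pairwise distinct. Since this happens for all sufficiently large $k$, I can fix a single index $k$ satisfying both conditions: $(x_1,\dots,x_n) \in Z_k$ and the cubes $J_1,\dots,J_n$ are distinct.

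Now I combine these observations. Because $x_i \in X_k$ and $X_k$ is a union of cubes in $\B^d_{l_k}$, each $J_i$ is one of the cubes comprising $X_k$, in particular $J_i \subset X_k$. Therefore $J_1 \times \cdots \times J_n$ is a strongly non-diagonal cube contained in $X_k^n$. Since this product cube contains the point $(x_1,\dots,x_n) \in Z_k$, it also belongs to $\B^{dn}_{l_k}(Z_k)$. This contradicts the hypothesis that $X_k^n$ avoids the strongly non-diagonal cubes of $\B^{dn}_{l_k}(Z_k)$, completing the proof.

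There is no real obstacle here; the only mildly delicate point is remembering that we need to pick $k$ large enough to simultaneously ensure (i) $(x_1,\dots,x_n) \in Z_k$, which uses the $\limsup$ definition of strong cover rather than mere covering, and (ii) the points $x_i$ sit in pairwise distinct $l_k$-cubes, which is what promotes the product cube to be strongly non-diagonal. Both conditions hold for infinitely many $k$, so any sufficiently large index works.
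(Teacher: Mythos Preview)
The proposal is correct and follows essentially the same argument as the paper's proof: both use the strong-cover hypothesis to find infinitely many indices $k$ with $(x_1,\dots,x_n)\in Z_k$, then use $l_k\to 0$ to ensure the $l_k$-cubes containing the distinct $x_i$ are pairwise distinct, yielding a strongly non-diagonal cube of $\B^{dn}_{l_k}(Z_k)$ inside $X_k^n$. One minor wording slip: your closing remark ``any sufficiently large index works'' is not literally true---condition (i) holds only for infinitely many $k$, not for all large $k$---but your actual argument, which merely fixes one index $k$ satisfying both conditions, is fine.
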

\begin{proof}
	Let $z \in Z$ be a point with distinct coordinates $z_1, \dots, z_n$. Define
	\[ \Delta = \{ (w_1, \dots, w_n) \in \RR^{dn}: \text{there exists $i \neq j$ such that $w_i = w_j$} \}. \]
	Then $d(\Delta,z) > 0$, where $d$ is the Hausdorff distance between $\Delta$ and $z$. Since $\{ Z_k \}$ strongly covers $Z$, there is a subsequence $\{ k_m \}$ such that $z \in Z_{k_m}$ for every index $m$. Since $l_k\searrow 0$ and thus $l_{k_m}\searrow 0$, if $m$ is sufficiently large then $\sqrt{dn}l_{k_m}<\Delta$ (note that $\sqrt{dn}l_{k_m}$ is the diameter of a cube in $\B_{l_{k_m}}^{dn}$). For such a choice of $m$, we have that if $I\in \B_{l_{k_m}}^{dn}(Z_{k_m})$ is the (unique) cube in $\B_{l_{k_m}}^{dn}$ containing $z$, then $I\cap\Delta=\emptyset$. But this means $I$ is strongly non-diagonal. Since $X_{k_m}$ avoids the strongly non-diagonal cubes of $Z_{k_m}$, we conclude that $z \not \in X_{k_m}^n$. In particular, $z\not\in \bigcap_{k=1}^\infty X_k$.
\end{proof}

We are now ready to construct the set $X$ in Theorem \ref{mainTheorem}. Recall that $Z \subset \RR^{dn}$ is a countable union of compact sets, each with lower Minkowski dimension at most $\alpha$. Thus we can write \[ Z =  \bigcup_{i=1}^{\infty} Y_i  \quad \text{ with }  \lowminkdim(Y_i) \leq \alpha  \text{ for each $i$}. \]  Recall that in the statement of Theorem \ref{mainTheorem}, we assumed that $\alpha\geq d$. However, it might be the case that some of the sets $Y_i$ have lower Minkowski dimension smaller than $d$. We will deal with this minor annoyance as follows. Let $H\subset[0,1)^{dn}$ be a set satisfying $\#\B_{l}^{dn}(H)\geq l^{-d}$ for each $l\in (0,1]$ (for example, $H$ could be the intersection of $[0,1)^{dn}$ with a finite union of $d$-dimensional hyperplanes). Let $\{i_k\}$ be a sequence of integers that repeats each integer infinitely often and define $Z_k = Y_{i_k}\cup H.$ The sequence of sets $\{Z_k\}$ is a strong cover of $Z$; each set $Z_k$ has lower Minkowski dimension at most $\alpha$ and satisfies the bound 
\begin{equation}\label{lowerBoundZk}
\#\B_{l}^{dn}(Z_k)\geq l^{-d}\quad\textrm{for all}\ l\in (0,1].
\end{equation}

Our next task is to specify the length scales $\{l_k\}$. We define these scales inductively, predicated on a sequence of small constants $\epsilon_k \searrow 0$ that is fixed at the outset. We will choose the sequence $\epsilon_k \searrow 0$ so that $dn-\alpha-2\epsilon_k>0$ for each $k$. Define $l_0=1$. Suppose that the length scales $l_0,\ldots,l_{k-1}$ have been chosen. Define
%\begin{equation}\label{defnVarEpsk}
%\varepsilon_k = \min \left(\frac{dn - \alpha}{4k},\ \frac{n-1}{2k} \right).
%\end{equation}
%Note that
%$$dn - \alpha - 2\varepsilon_k > 0.$$
Since $\lowminkdim(Z_k) \leq \alpha$, Definition \ref{defnMinkowskiDim} implies that there exist arbitrarily small lengths $l$ which satisfy 
%$\# \B_l^{dn}(Z_k) \leq l^{-\alpha - (\varepsilon_k/2)}$. This implies that there exist arbitrarily small dyadic lengths $l$ satisfying
\begin{equation}\label{coveringOfBdnlZk}
\# \B^{dn}_l(Z_k) \leq l^{-\alpha - \frac{\varepsilon_k}{2}}.
\end{equation}
In addition, we can choose $l = l_k$ small enough to satisfy 
%In particular, there exist arbitrarily small lengths $l$ that satisfy \eqref{coveringOfBdnlZk} and the requirements
\begin{align}
l^{dn-\alpha-\varepsilon_k} & \leq \frac{1}{2}l_{k-1}^{dn}, \text{ and } \label{coverBoundRequirement}\\
l^{\epsilon_k} & \leq l_{k-1}^{2d}\label{quadDecayRequirement}.
\end{align}
%Let $l_k>0$ be a dyadic length that satisfies \eqref{coveringOfBdnlZk}, \eqref{coverBoundRequirement}, and \eqref{quadDecayRequirement}.

\subsection{Construction of $X$} 
\begin{lemma} 
%Next, we will inductively construct 
Given a sequence of dyadic length scales $l_k$ obeying \eqref{coveringOfBdnlZk}, \eqref{coverBoundRequirement} and \eqref{quadDecayRequirement} as above, there exists a sequence of sets $\{X_k\}$ and a sequence of dyadic intermediate scales $r_k$ obeying the following properties. Each set $X_k$ is a union of cubes in $\B_{l_k}^d(X_{k-1})$ that avoids the strongly non-diagonal cubes of $\mathcal B_{l_k}^{dn}(Z_k)$. Furthermore, for each index $k\geq 1$ we have
\begin{align}
& l_k\leq r_k\leq l_{k-1},\\
& r_k \lesssim l_{k}^{\frac{dn-\alpha -\epsilon_k}{d(n-1)}},\label{rkSizeBound}\\
& \# \B^d_{l_k}(X_k \cap I) \geq \frac{1}{2}(l_{k-1}/r_k)^d  \text{ for each}\ I\in \B_{l_{k-1}}^d(X_{k-1}), \label{manyIkInIkm1}\\
&\# \B^d_{l_k}(X_k \cap I') \leq 1 \text{ for each}\ I' \in \B_{r_{k}}^d(X_{k-1}).\label{XkWellDistributed}
\end{align}
\end{lemma}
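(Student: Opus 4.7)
The plan is to construct the sets $X_k$ by iterating Lemma \ref{discretelemma}. Set $X_0 = [0,1)^d$, and suppose $X_{k-1}$ has already been constructed as a union of cubes in $\mathcal{B}_{l_{k-1}}^d$. Apply Lemma \ref{discretelemma} with the parameters $E = X_{k-1}$, $G = Z_k$, $l = l_{k-1}$, and $s = l_k$. Let $r_k$ and $F$ be the outputs of the lemma, and declare $X_k := F$. The three conclusions of Lemma \ref{discretelemma} (avoidance, non-concentration, large size) then translate directly into the avoidance of strongly non-diagonal cubes of $\mathcal{B}_{l_k}^{dn}(Z_k)$, together with \eqref{XkWellDistributed} and \eqref{manyIkInIkm1} respectively. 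The bound $l_k \leq r_k \leq l_{k-1}$ is built into the lemma.

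The two pieces of real work are to verify hypothesis \eqref{ZsLarge} of Lemma \ref{discretelemma} and to deduce the quantitative bound \eqref{rkSizeBound} on $r_k$. For the lower bound in \eqref{ZsLarge}, I would use \eqref{lowerBoundZk} to write $\#\mathcal{B}_{l_k}^{dn}(Z_k) \geq l_k^{-d} \geq (l_{k-1}/l_k)^d$, where the second inequality uses $l_{k-1} \leq 1$. For the upper bound in \eqref{ZsLarge}, I would use \eqref{coveringOfBdnlZk} to bound $\#\mathcal{B}_{l_k}^{dn}(Z_k) \leq l_k^{-\alpha - \varepsilon_k/2}$, and then note
\[
l_k^{-\alpha - \varepsilon_k/2} \;\leq\; l_k^{-\alpha - \varepsilon_k} \cdot l_k^{\varepsilon_k/2} \;\leq\; l_k^{-\alpha - \varepsilon_k},
\]
which combined with \eqref{coverBoundRequirement} yields $l_k^{-\alpha-\varepsilon_k/2} l_k^{dn} \leq \tfrac{1}{2}l_{k-1}^{dn}$, i.e., the right half of \eqref{ZsLarge}.

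For \eqref{rkSizeBound}, the conclusion \eqref{rBound} of Lemma \ref{discretelemma} gives
\[
r_k \;\sim\; \bigl(l_{k-1}^{-d}\, l_k^{dn}\, \#\mathcal{B}_{l_k}^{dn}(Z_k)\bigr)^{\frac{1}{d(n-1)}} \;\leq\; \bigl(l_{k-1}^{-d}\, l_k^{dn - \alpha - \varepsilon_k/2}\bigr)^{\frac{1}{d(n-1)}} \;=\; l_{k-1}^{-\frac{1}{n-1}}\, l_k^{\frac{dn - \alpha - \varepsilon_k/2}{d(n-1)}}.
\]
To absorb the factor $l_{k-1}^{-1/(n-1)}$ into a worsening of the exponent of $l_k$ by $\varepsilon_k/2$, one needs $l_k^{\varepsilon_k/(2d)} \lesssim l_{k-1}$, which is exactly what \eqref{quadDecayRequirement} provides (and is somewhat stronger than needed). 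This gives $r_k \lesssim l_k^{(dn-\alpha-\varepsilon_k)/(d(n-1))}$.

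The main obstacle is purely bookkeeping: making sure the three hypotheses \eqref{coveringOfBdnlZk}, \eqref{coverBoundRequirement}, and \eqref{quadDecayRequirement} on the length scales line up correctly with the hypothesis \eqref{ZsLarge} of Lemma \ref{discretelemma} and with the desired exponent in \eqref{rkSizeBound}. There is no genuinely new geometric content beyond Lemma \ref{discretelemma}; the recursion is a mechanical application of it, with the scales $\{l_k\}$ chosen precisely so that the hypotheses are met at every step.
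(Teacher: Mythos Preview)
Your proposal is correct and follows essentially the same approach as the paper: set $X_0=[0,1)^d$, apply Lemma~\ref{discretelemma} inductively with $E=X_{k-1}$, $G=Z_k$, $l=l_{k-1}$, $s=l_k$, verify \eqref{ZsLarge} via \eqref{lowerBoundZk} and \eqref{coveringOfBdnlZk}--\eqref{coverBoundRequirement}, and then derive \eqref{rkSizeBound} from \eqref{rBound} by absorbing the $l_{k-1}^{-d}$ factor using \eqref{quadDecayRequirement}. The only cosmetic difference is that the paper writes the final chain of inequalities for $r_k$ slightly differently, factoring out $(l_{k-1}^{-2d} l_k^{\epsilon_k})^{1/(2d(n-1))}$ explicitly, but the algebra is identical to yours.
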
 
\begin{proof} 
We will proceed to construct $X_k$ by induction, using Lemma \ref{discretelemma} as building block. Set $X_0=[0,1)^d$.  
%and define $r_0=1$; $X_0$ and $r_0$ certainly satisfy the properties listed above. 
Next, suppose that the sets $X_0,\ldots,X_{k-1}$ have been defined. Our goal is to apply Lemma \ref{discretelemma} to $E=X_{k-1}$ and $G=Z_k$ with $l=l_{k-1}$ and $s=l_k$. 
This will be possible once we verify the hypothesis \eqref{ZsLarge}, which in this case takes the form 
\begin{equation}  \left(l_{k-1}/l_k \right)^{d} \leq \#\B_{l_k}^{dn}(Z_k) \leq \frac{1}{2} \left(l_{k-1}/l_k \right)^{dn}. \label{need-to-check} \end{equation} 
The right hand side follows from inequalities \eqref{coveringOfBdnlZk} and \eqref{coverBoundRequirement}. 
%imply that 
%$$
%\#\B_{l_k}^{dn}(Z_k)\leq\frac{1}{2}(l_{k-1}/l_k)^{dn}.
%$$
On the other hand, \eqref{lowerBoundZk} and that fact that $l_k, l_{k-1}\leq 1$ implies that
$$
(l_{k-1}/l_k)^d\leq l_{k}^{-d}\leq \#\B_{l_k}^{dn}(Z_k), 
$$
establishing the left inequality in \eqref{need-to-check}. Applying Lemma \ref{discretelemma} as described above now produces a dyadic length 
\begin{equation}\label{definOfr}
r\sim \big(l_{k-1}^{-d}l_k^{dn} \# \B^{dn}_{l_k}(Z_k)\big)^{\frac{1}{d(n-1)}} 
\end{equation}
and a set $F\subset X_{k-1}$ that is a union of cubes in $\B_{l_k}^{d}$. The set $F$ satisfies Properties \ref{avoidanceItem}, \ref{nonConcentrationItem}, and \ref{largeSizeItem} from the statement of Lemma \ref{discretelemma}. Define $r_k=r$ and $X_k=F$. The estimate  \eqref{rkSizeBound} on $r_k$ follows from \eqref{definOfr} using the known bounds \eqref{coveringOfBdnlZk} and \eqref{quadDecayRequirement}:  
\[ r_k \lesssim \bigl( l_{k-1}^{-d}  l_k^{dn -\alpha - \frac{\epsilon_k}{2}} \bigr)^{\frac{1}{d(n-1)}} =  \bigl( l_{k-1}^{-d} l_k^{\frac{\epsilon_k}{2}} l_k^{dn -\alpha - \epsilon_k} \bigr)^{\frac{1}{d(n-1)}} = \bigl( l_{k-1}^{-2d} l_k^{\epsilon_k}\bigr)^{\frac{1}{2d(n-1)}} l_{k}^{\frac{dn-\alpha -\epsilon_k}{d(n-1)}} \lesssim l_{k}^{\frac{dn-\alpha -\epsilon_k}{d(n-1)}}. \] 
The requirements \eqref{manyIkInIkm1} and \eqref{XkWellDistributed} follow from Properties \ref{largeSizeItem} and \ref{nonConcentrationItem} of Lemma \ref{discretelemma} respectively.
\end{proof} 

Now that we have defined the sets $\{X_k\}$, we set $X:=\bigcap X_k$. Since $X_k$ avoids strongly non-diagonal cubes in $Z_{k}$, Lemma \ref{stronglydiagonal} implies that if $x_1, \dots, x_n \in X$ are distinct, then $(x_1, \dots, x_n) \not \in Z$. To finish the proof of Theorem \ref{mainTheorem}, we must show that $\dim_{\mathbf H}(X)\geq \frac{dn - \alpha}{n - 1}$. This will be done in the next section.

\section{Dimension Bounds}\label{dimensionsection}
To complete the proof of Theorem \ref{mainTheorem}, we must show that $\dim_{\mathbf{H}}(X) \geq  \frac{dn - \alpha}{n - 1}$.  %, where
%
% \[ \beta = \frac{dn - \alpha}{n - 1}. \]
In view of \eqref{Hdim-defn}, we will do this by constructing a Frostman measure of appropriate dimension supported on $X$. 
%
% We begin with a rough outline of our proof strategy. Recall that from the previous section, we have a decreasing sequence of lengths $\{ l_k \}$. The most convenient way to examine the dimension of $X$ at various scales is to use Frostman's lemma (see Definition \ref{frostmanItem}). We construct a probability measure $\mu$ supported on $X$ such that for all $\varepsilon > 0$, for all dyadic lengths $l$, and for all $I \in \B^d_l$, $\mu(I) \lesssim_\varepsilon l^{\beta - \varepsilon}$. We begin by showing that for each $k\geq 1$,
% \begin{equation}\label{muIScalek}
% \mu(I) \lesssim l_k^{\beta - O(1/k)}\quad\textrm{for all}\ I \in \B^d_{l_k}.
% \end{equation}
% Heuristically, this inequality stays that $X$ looks like a set with dimension $\beta - O(1/k)$ at the scale $l_k$. Our next task will be understand the behavior of $\mu$ (and thus $X$) at scales between $l_{k-1}$ and $l_k$. This task is complicated by the fact that $l_{k}$ might be much smaller than $l_{k-1}$ (indeed, we have no effective control on how quickly the length scales $\{l_k\}$ converge to 0). Thankfully, however, the sets $X_k$ defined in the previous section are unions of cubes of sidelength $I_{l_k}$ that are somewhat uniformly distributed at scales larger than $l_k$ (this Property \ref{nonConcentrationItem} in Lemma \ref{discretelemma}); this fact will allow us to establish an analogue of \eqref{muIScalek} at intermediate scales between $l_k$ and $l_{k+1}$. 
%

We start by defining a premeasure on $\bigcup_{i = 1}^\infty \B^d_{l_i}[0,1)^d$. Set $\mu([0,1)^d) = 1$. Suppose now that $\mu(I)$ has been defined for all cubes in $\bigcup_{i = 1}^{k-1} \B^d_{l_i}[0,1)^d$, and let $J \in \B^d_{l_k}$. Let $I \in \B^d_{l_{k-1}}$ be the `parent cube' of $J$ (i.e. $I$ is the unique cube in $\B^d_{l_{k-1}}$ with $J \subset I$). Define
\begin{equation} \label{muRecurse} 
\mu(J) = \left\{ \begin{array}{ll}
{\mu(I)}/{\# \B^d_{l_k}(X_k \cap I)}& \textrm{if }  J \subset X_k,\\
0& \textrm{otherwise}.
\end{array}
\right. 
\end{equation}
Observe that for each index $k\geq 1$ and each $I \in \B_{l_{k-1}}^d$, 
\begin{equation}\label{muBreakDown}
\sum_{J\in \B_{l_k}^d(I) }\mu(J)=\sum_{J \in \B_{l_k}^d(X_k\cap I) }\mu(J) = \mu(I).
\end{equation}
In particular, for each index $k$ we have
$$
\sum_{I\in\B_{l_k}}\mu(I)=1.
$$
By a standard argument involving the Caratheodory extension theorem \cite[Proposition 1.7]{Falconer}, the premeasure $\mu$ extends to a measure on the Borel subsets of $[0,1)^d$. Note that for each $k\geq 1$, $\operatorname{supp}(\mu)\subset X_k$. Thus $\mu$ is supported on $\bigcap X_k = X$. To complete the proof of Theorem \ref{mainTheorem} we will show that $\mu$ is a Frostman measure of dimension $\frac{dn - \alpha}{n - 1}-\epsilon$ for every $\epsilon>0$.

\begin{lemma}\label{massSomeScales}
	For each $k\geq 1$ and each $J \in \B^d_{l_k}(X)$, 
	$$
	\mu(J) \lesssim l_k^{\frac{dn-\alpha}{n-1}- \eta_k}, \quad \text{ where } \quad \eta_k = \frac{n+1}{2(n-1)} \epsilon_k \searrow 0 \text{ as } k \rightarrow \infty.
	$$
\end{lemma}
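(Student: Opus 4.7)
The plan is induction on $k$, tracking the mass down the nested chain of parent cubes $J = J_k \subset J_{k-1} \subset \cdots \subset J_0 = [0,1)^d$, where $J_i \in \B^d_{l_i}(X_i)$. First I would record the one-step bound. Combining the recursion \eqref{muRecurse} with the lower bound \eqref{manyIkInIkm1} gives
\[
\mu(J_i) \;=\; \frac{\mu(J_{i-1})}{\# \B^d_{l_i}(X_i \cap J_{i-1})} \;\leq\; 2\mu(J_{i-1}) (r_i/l_{i-1})^d,
\]
and substituting the bound \eqref{rkSizeBound} for $r_i$ yields
\[
\mu(J_i) \;\lesssim\; \mu(J_{i-1}) \frac{l_i^{\beta - \gamma_i}}{l_{i-1}^d}, \qquad \beta := \tfrac{dn-\alpha}{n-1}, \qquad \gamma_i := \tfrac{\epsilon_i}{n-1},
\]
with implicit constant depending only on $d$ and $n$.

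Next I would run the induction. The base case $k=1$ is immediate: $\mu(J_0) = 1$, $l_0 = 1$, so $\mu(J_1) \lesssim r_1^d \lesssim l_1^{\beta - \gamma_1} \leq l_1^{\beta - \eta_1}$, where the last step uses $\gamma_1 \leq \eta_1$ and $l_1 \leq 1$. For the inductive step, assume $\mu(J_{k-1}) \lesssim l_{k-1}^{\beta - \eta_{k-1}}$. Feeding this into the one-step bound yields
\[
\mu(J_k) \;\lesssim\; l_{k-1}^{\beta - \eta_{k-1} - d} \, l_k^{\beta - \gamma_k}.
\]
The exponent $\beta - \eta_{k-1} - d = -(\alpha-d)/(n-1) - \eta_{k-1}$ is nonpositive, so the factor $l_{k-1}^{\beta - \eta_{k-1} - d}$ is at least $1$ and must be controlled. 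I would invoke \eqref{quadDecayRequirement} in the rewritten form $l_{k-1}^{-d} \leq l_k^{-\epsilon_k/2}$ to transfer the $l_{k-1}^{-d}$ burden onto $l_k$:
\[
\mu(J_k) \;\lesssim\; l_{k-1}^{\beta - \eta_{k-1}} \, l_k^{\beta - \gamma_k - \epsilon_k/2}.
\]
The key arithmetic identity is $\eta_k - \gamma_k = \epsilon_k/2$, which follows directly from $\eta_k = (n+1)\epsilon_k/(2(n-1))$ and $\gamma_k = \epsilon_k/(n-1)$. Hence the exponent on $l_k$ is exactly $\beta - \eta_k$. Since $\beta - \eta_{k-1} > 0$ (ensured by the initial choice of $\epsilon_k$ satisfying $dn - \alpha - 2\epsilon_k > 0$) and $l_{k-1} \leq 1$, the leftover prefactor $l_{k-1}^{\beta - \eta_{k-1}}$ is at most $1$, completing the inductive step.

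The main obstacle is ensuring the implicit constant stays uniform in $k$, since each inductive step multiplies the running constant by $2C^d \, l_{k-1}^{\beta - \eta_{k-1}}$, where $C$ is the constant from \eqref{rkSizeBound}. This factor is bounded by $1$ once $l_{k-1}$ is small enough, a smallness requirement that can be appended to the (already flexible) criteria for choosing $l_k$ in Section \ref{discretizationsection}. The essence of the calculation is the bookkeeping identity $\eta_k = \gamma_k + \epsilon_k/2$: the contribution $\gamma_k$ is the intrinsic cost of \eqref{rkSizeBound}, and the additional $\epsilon_k/2$ is precisely what \eqref{quadDecayRequirement} charges for relocating the $l_{k-1}^{-d}$ factor onto the $l_k$-scale.
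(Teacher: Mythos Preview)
Your argument is essentially correct, but it works much harder than necessary. The paper's proof is a single step, not an induction: since $\mu$ is a probability measure, the parent cube $I\in\B^d_{l_{k-1}}$ satisfies the trivial bound $\mu(I)\leq 1$. Feeding that into the very same one-step inequality you derived,
\[
\mu(J)\leq \frac{2r_k^d}{l_{k-1}^d}\,\mu(I)\leq \frac{2r_k^d}{l_{k-1}^d}\lesssim \frac{l_k^{\beta-\gamma_k}}{l_{k-1}^d}=l_k^{\beta-\eta_k}\cdot\frac{l_k^{\epsilon_k/2}}{l_{k-1}^d}\leq l_k^{\beta-\eta_k},
\]
finishes the lemma immediately, with no recursion to manage. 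Your key arithmetic identity $\eta_k=\gamma_k+\epsilon_k/2$ is exactly the computation the paper hides inside that equality; you simply do not need to carry along the inductive hypothesis $\mu(J_{k-1})\lesssim l_{k-1}^{\beta-\eta_{k-1}}$ because the crude bound $\mu(J_{k-1})\leq 1$ already suffices.

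Two small technical points about your version. First, the claim that $\beta-\eta_{k-1}>0$ follows from $dn-\alpha-2\epsilon_{k-1}>0$ is only valid for $n\leq 3$; for $n\geq 4$ one needs the slightly stronger $\epsilon_{k-1}<\tfrac{2(dn-\alpha)}{n+1}$, which is of course just as easy to impose but is not what the paper states. Second, your fix for the growing implicit constant (append a smallness condition on $l_k$ so that $2C^d l_{k-1}^{\beta-\eta_{k-1}}\leq 1$) does work, but it modifies the construction in Section~\ref{discretizationsection}. The paper's one-step argument sidesteps both issues entirely: there is no iteration of constants and no appeal to the sign of $\beta-\eta_{k-1}$.
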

\begin{proof}
	Let $J \in \B^d_{l_k}$ and let $I \in B^d_{l_{k-1}}$ be the parent of cube of $I$. Since $\mu$ is a probability measure, we have $\mu(I^\prime)\leq 1$. Combining \eqref{muRecurse}, \eqref{manyIkInIkm1}, \eqref{rkSizeBound}, and \eqref{quadDecayRequirement} we obtain
	$$
	\mu(J)\leq \frac{2r_k^d}{l_{k-1}^d}\mu(I)\leq \frac{2r_k^d}{l_{k-1}^d}\lesssim \frac{l_{k}^{\frac{dn-\alpha - \epsilon_k}{n-1}}}{l_{k-1}^d}=l_k^{\frac{dn-\alpha}{n-1}-\eta_k}\big(l_k^{\frac{\epsilon_k}{2}}/l_{k-1}^d\big)\leq l_k^{\frac{dn-\alpha}{n-1}-\eta_k}.\qedhere
	$$
\end{proof}

\begin{corollary}\label{muAtScaleRk}
For each $k\geq 1$ and each $I' \in \B^d_{r_k} (X_{k-1})$, 
	\begin{equation} 
	\mu(I') \lesssim (r_k/l_{k-1})^d l_{k-1}^{\frac{dn-\alpha}{n-1}-\eta_{k-1}}. \label{mu-Rk}
	\end{equation} 
\end{corollary}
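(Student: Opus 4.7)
The goal is to bound the $\mu$-mass of a cube at the intermediate scale $r_k$ by interpolating between the bound at scale $l_{k-1}$ (coming from Lemma \ref{massSomeScales}) and the non-concentration property \eqref{XkWellDistributed}. I would proceed as follows.

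Fix $I' \in \B^d_{r_k}(X_{k-1})$. Since $r_k \leq l_{k-1}$, there is a unique parent cube $I \in \B^d_{l_{k-1}}$ containing $I'$, and because $I'\subset X_{k-1}$, this parent lies in $\B^d_{l_{k-1}}(X_{k-1})$. By the nested structure of the construction and \eqref{muRecurse}, the measure $\mu$ assigns mass only to cubes that appear in some $X_j$, so
\[
\mu(I') \;=\; \sum_{\substack{J \in \B^d_{l_k}(X_k) \\ J \subset I'}} \mu(J).
\]
The non-concentration estimate \eqref{XkWellDistributed} says this sum has at most one nonzero term, namely the (at most) one subcube $J \in \B^d_{l_k}(X_k)$ sitting inside $I'$.

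If no such $J$ exists, then $\mu(I') = 0$ and the bound is trivial. Otherwise, using \eqref{muRecurse} together with the lower bound \eqref{manyIkInIkm1} on $\#\B^d_{l_k}(X_k \cap I)$, I get
\[
\mu(I') \;=\; \mu(J) \;=\; \frac{\mu(I)}{\#\B^d_{l_k}(X_k \cap I)} \;\leq\; \frac{2 r_k^d}{l_{k-1}^d}\,\mu(I).
\]
It remains to control $\mu(I)$. For $k \geq 2$, Lemma \ref{massSomeScales} applied at level $k-1$ gives $\mu(I) \lesssim l_{k-1}^{\frac{dn-\alpha}{n-1} - \eta_{k-1}}$, which combines with the previous display to produce \eqref{mu-Rk}. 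For the base case $k=1$ the only parent is $I = [0,1)^d$ with $\mu(I) = 1 = l_0^{\frac{dn-\alpha}{n-1}-\eta_0}$, so the same inequality holds trivially.

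There is no real obstacle here: the corollary is essentially a one-step interpolation, and the only thing to be careful about is keeping track that the parent $I$ belongs to $\B^d_{l_{k-1}}(X_{k-1})$ (so Lemma \ref{massSomeScales} applies to it) and treating $k=1$ separately since Lemma \ref{massSomeScales} was stated only for $k \geq 1$.
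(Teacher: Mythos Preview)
Your proof is correct and follows the same route as the paper: identify the parent cube $I\in\B^d_{l_{k-1}}(X_{k-1})$, use the non-concentration bound \eqref{XkWellDistributed} to reduce $\mu(I')$ to the mass of a single $l_k$-cube, apply the recursion \eqref{muRecurse} together with \eqref{manyIkInIkm1}, and finish with Lemma~\ref{massSomeScales}. You are in fact slightly more careful than the paper in isolating the base case $k=1$, where Lemma~\ref{massSomeScales} does not directly apply to the parent at level $0$.
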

\begin{proof}
%Lemma \ref{massSomeScales} allows us to control $\mu$ at the scales $\{l_k\}$. 
Let us fix a cube $I' \in \B^d_{r_k}(X_{k-1})$, and let $I$ denote its unique parent cube in $\B_{l_{k-1}}^d (X_{k-1})$. According to \eqref{XkWellDistributed}, $I'$ contains at most one cube in $\B_{l_k}^d(I)$; let us denote this cube by $J(I')$ if it exists. Then the mass distribution rule given by \eqref{muRecurse} dictates that 
\begin{align*}
\mu(I') = \mu(X_k \cap I') = \begin{cases} \mu(J(I')) = {\mu(I)}/{\# \B_{l_k}^d(X_k \cap I)}  &\text{ if } \# \B_{l_k}^d(X_k \cap I) = 1, \\ 0 &\text{ otherwise.} \end{cases} 
\end{align*} 
Using the estimate \eqref{manyIkInIkm1} and applying Lemma \ref{massSomeScales} to $I \in \mathcal B_{l_{k-1}}^d(X)$, we arrive at the claimed bound \eqref{mu-Rk}. 
\end{proof}
Lemma \ref{massSomeScales} and Corollary \ref{muAtScaleRk} allow us to control the behavior of $\mu$ at all scales. %To understand the behavior of $\mu$ at other scales, we will obtain a Frostman measure bound at {\it all} scales, we need to apply a covering argument. This is where the uniform mass assignment technique comes into play. Because $\mu$ behaves like a full dimensional set between the scales $l_k$ and $r_{k+1}$, we won't be penalized for making the gap between $l_k$ and $r_{k+1}$ arbitrarily large. This is essential to our argument, because $l_k$ decays faster than $2^{-k^m}$ for any $m > 0$.

\begin{lemma} \label{frostmanBound}
For every $\alpha \in [d, dn)$, and for each $\epsilon>0$, there is a constant $C_\epsilon$ so that for all dyadic lengths $l\in (0,1]$ and all $I \in \B_l^d$, we have
	\begin{equation} 
	\mu(I) \leq C_{\epsilon} l^{\frac{dn - \alpha}{n - 1} - \epsilon}. \label{mu-ball-condition} 
	\end{equation} 
\end{lemma}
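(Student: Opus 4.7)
Write $\beta = (dn-\alpha)/(n-1)$. The plan is to combine the two scale-specific bounds already in hand: Lemma \ref{massSomeScales} controls $\mu$ at scales $l_k$, and Corollary \ref{muAtScaleRk} controls it at scales $r_k$. Between these special scales we interpolate using the nested cube structure. The hypothesis $\alpha\geq d$ gives $\beta\leq d$, so the exponent $\beta-d$ is non-positive; this sign observation is what lets us pass from bounds at scales $l_k$ or $r_k$ to bounds at any intermediate dyadic scale. Fix $\epsilon>0$ and choose $k_0$ large enough that $\eta_k<\min(\epsilon,\beta)$ for every $k\geq k_0$; this is possible because $\eta_k\searrow 0$ and $\beta>0$. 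Any dyadic $l\in(0,l_{k_0-1}]$ falls in exactly one of the intervals $[l_k,r_k]$ or $[r_k,l_{k-1}]$ for some $k\geq k_0$, and we handle these two cases separately, then absorb the finitely many larger dyadic scales into the constant $C_\epsilon$.

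Suppose first that $r_k\leq l\leq l_{k-1}$. A cube $I\in\B_l^d$ is a disjoint union of at most $(l/r_k)^d$ cubes in $\B_{r_k}^d$, and only the sub-cubes lying in $X_{k-1}$ contribute to $\mu(I)$. Summing the bound from Corollary \ref{muAtScaleRk} over these sub-cubes yields
\[ \mu(I)\leq (l/r_k)^d\cdot C(r_k/l_{k-1})^d l_{k-1}^{\beta-\eta_{k-1}} = C l^d\, l_{k-1}^{\beta-d-\eta_{k-1}}. \]
Since $\beta-d-\eta_{k-1}\leq 0$ and $l\leq l_{k-1}\leq 1$, replacing $l_{k-1}$ by the smaller quantity $l$ in the last factor only enlarges the right-hand side, giving $\mu(I)\lesssim l^{\beta-\eta_{k-1}}\leq l^{\beta-\epsilon}$.

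Now suppose $l_k\leq l\leq r_k$. Here the naive estimate $(l/l_k)^d$ for the number of contributing $l_k$-sub-cubes inside $I$ would be too crude. Instead, we exploit the non-concentration property \eqref{XkWellDistributed}: since $l\leq r_k$, the cube $I$ is contained in a unique $r_k$-cube $I'$, and $I'$ contains at most one cube $J\in\B_{l_k}^d(X_k)$. Consequently either $\mu(I)=0$, or $J\subset I$ and $\mu(I)=\mu(J)$. In the latter case Lemma \ref{massSomeScales} gives $\mu(J)\lesssim l_k^{\beta-\eta_k}$, and since $\beta-\eta_k>0$ and $l\geq l_k$, we obtain $\mu(I)\lesssim l^{\beta-\eta_k}\leq l^{\beta-\epsilon}$.

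Finally, for the finitely many dyadic $l\in(l_{k_0-1},1]$, we have $\mu(I)\leq\mu([0,1)^d)=1$ and $l$ is bounded below by $l_{k_0-1}$, so $l^{-(\beta-\epsilon)}$ is bounded above by a constant depending only on $k_0$, $\beta$, and $\epsilon$. Absorbing all implicit constants, including this one, into a single $C_\epsilon$ finishes the proof. The main conceptual step is the range $[l_k,r_k]$: without \eqref{XkWellDistributed} the transition from scale $l_k$ to scale $l$ would cost an unabsorbable factor $(l/l_k)^d$, and recognizing that the non-concentration property lets us replace this factor by $1$ (since all mass in $I'$ is carried by a single $l_k$-cube) is the crux of the argument.
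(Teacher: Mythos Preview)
Your argument is correct and follows essentially the same two-case split as the paper's proof (the paper indexes by $l_{k+1}\leq l\leq l_k$ rather than $l_k\leq l\leq l_{k-1}$, but the content is identical). One small indexing slip: in your Case~1 with $k=k_0$ you invoke $\eta_{k_0-1}<\epsilon$, which your choice of $k_0$ does not guarantee; fix this by requiring $\eta_k<\min(\epsilon,\beta)$ for all $k\geq k_0-1$ instead.
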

\begin{proof} Fix $\epsilon > 0$. Since $\eta_k \searrow 0$ as $k\to\infty$, there is a constant $C_{\epsilon}$ so that $l_k^{-\eta_k}\leq C_{\epsilon}l_k^{-\epsilon}$ for each $k\geq 1$ (for example, we could choose $C_{\epsilon}=l_{k_0}^{-\eta_{k_0}}$, where $k_0$ is the largest integer for which $\eta_{k_0}<\epsilon$). Next, let $k$ be the (unique) index so that $l_{k+1}\leq l\leq l_{k}$. We will split the proof of \eqref{mu-ball-condition} into two cases, depending on the position of  $l$ within $[l_{k+1}, l_k]$. 
	%We now consider several cases. 
	%\begin{itemize}
	%\item If $k<k_0$, then $l\geq l_{k_0}$ and thus 
	%$$
	%\mu(I)\leq 1 = \big(l^{\frac{dn - \alpha}{n - 1} - \epsilon}\big)^{-1}\big(l^{\frac{dn - \alpha}{n - 1} - \epsilon}\big)\leq C_{\epsilon}\big(l^{\frac{dn - \alpha}{n - 1} - \epsilon}\big).
	%$$

	{\em{Case 1: }} If $r_{k+1} \leq l \leq l_k$, 
	we can cover $I$ by $(l/r_{k+1})^d$ cubes in $\B^d_{r_{k+1}}$. By Corollary \ref{muAtScaleRk},
	\begin{equation}
	\begin{split}
	\mu(I) & \lesssim (l/r_{k+1})^d (r_{k+1}/l_k)^d l_k^{\frac{dn-\alpha}{n-1}-\eta_k} \\
	& =(l/l_k)^d l_k^{\frac{dn-\alpha}{n-1}-\eta_{k}}\\
	& \leq l^{\frac{dn-\alpha}{n-1}} (l/l_k)^{\frac{\alpha - d}{n-1}} l_k^{-\eta_k}\\
	& \leq l^{\frac{dn-\alpha}{n-1} - \eta_k}  \\
	& \leq C_{\epsilon}l^{\frac{dn-\alpha}{n-1}-\epsilon}.
	\end{split}
	\end{equation}
The penultimate inequality is a consequence of our assumption $\alpha \geq d$. 

	{\em{Case 2: }} If $l_{k+1} \leq l \leq r_{k+1},$ we can cover $I$ by a single cube in $\B^d_{r_{k+1}}$. By \eqref{XkWellDistributed}, each cube in $\B^d_{r_{k+1}}$ contains at most one cube $I_0 \in \B^d_{l_{k+1}}(X_{k+1})$, so by Lemma \ref{massSomeScales},  
		$$ 
		\mu(I) \leq \mu(I_0) \lesssim l_{k+1}^{\frac{dn - \alpha}{n - 1} - \eta_{k+1}} 
		% \lesssim l_{k+1}^{\frac{dn - \alpha}{n - 1}}r_{k+1}^{-\eta_{k+1}\frac{d(n-1)}{dn-\alpha-\epsilon_{k+1}}}
		% \leq C_{\epsilon}l_{k+1}^{\frac{dn - \alpha}{n - 1}}r_{k+1}^{-\epsilon}
		\leq C_{\epsilon}l_{k+1}^{\frac{dn - \alpha}{n - 1} - \epsilon}
		\leq C_{\epsilon}l^{\frac{dn - \alpha}{n - 1} - \epsilon} \leq C_{\epsilon}l^{\frac{dn - \alpha}{n - 1} - \epsilon }.\qedhere
		$$
	%\end{itemize}

\end{proof}

Applying Frostman's lemma to Lemma \ref{frostmanBound} gives $\hausdim(X) \geq \frac{dn - \alpha}{n - 1} - \epsilon$ for every $\epsilon>0$, which concludes the proof of Theorem \ref{mainTheorem}.

\section{Applications}\label{applications}

As discussed in the introduction, Theorem \ref{mainTheorem} generalizes Theorems 1.1 and 1.2 from \cite{MalabikaRob}. In this section, we present two applications of Theorem \ref{mainTheorem} in settings where previous methods do not yield any results.

\subsection{Sum-sets avoiding specified sets}

\begin{theorem} \label{sumset-application} 
	Let $Y \subset \RR^d$ be a countable union of sets of Minkowski dimension at most $\beta < d$. Then there exists a set $X \subset \RR^d$ with Hausdorff dimension at least $d - \alpha$ such that $X + X$ is disjoint from $Y$.
\end{theorem}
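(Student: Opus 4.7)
The goal is to recast $(X+X)\cap Y=\emptyset$ as a two-point pattern avoidance problem and apply Theorem \ref{mainTheorem} with $n=2$. The natural pattern is $Z^{(1)}=\{(x_1,x_2)\in\RR^{2d}:x_1+x_2\in Y\}$, since avoidance of $Z^{(1)}$ on distinct pairs of $X$ is exactly the condition $x_1+x_2\notin Y$ whenever $x_1\neq x_2\in X$. However, Theorem \ref{mainTheorem} does not constrain the diagonal $(x,x)$, so to also rule out $2x\in Y$ for $x\in X$ I would enlarge the pattern by a second component $Z^{(2)}=\{(x_1,x_2)\in\RR^{2d}:2x_1\in Y\}$, and work with $Z=Z^{(1)}\cup Z^{(2)}$. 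For any $x\in X$, uncountability of $X$ (automatic once $\hausdim(X)>0$) lets me pick $y\in X\setminus\{x\}$; avoidance of $Z^{(2)}$ on the distinct pair $(x,y)$ then forces $2x\notin Y$.

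Writing $Y=\bigcup_i Y_i$ as a countable union of compact sets of Minkowski dimension at most $\beta$, and restricting to $Y_i\cap[0,2]^d$ (the only part relevant for sums of elements of $[0,1]^d$), I would decompose $Z$ into compact pieces $Z_i^{(j)}$, $j\in\{1,2\}$. The key step is to verify that each $Z_i^{(j)}$ has lower Minkowski dimension at most $d+\beta$. For $Z_i^{(1)}$, the invertible linear map $L(x_1,x_2)=(x_1,x_1+x_2)$ is bi-Lipschitz and sends $Z_i^{(1)}$ into $[0,1]^d\times Y_i$, whose lower Minkowski dimension is at most $d+\beta$ because $\#\B^{2d}_{\ell}([0,1]^d\times Y_i)\lesssim\ell^{-d}\,\#\B^d_{\ell}(Y_i)$. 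Since bi-Lipschitz bijections preserve lower Minkowski dimension, the same bound holds for $Z_i^{(1)}$; the piece $Z_i^{(2)}\subset(\tfrac{1}{2}Y_i)\times[0,1]^d$ obeys the identical product bound.

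With the dimension estimate in hand, Theorem \ref{mainTheorem} with $n=2$ and $\alpha=d+\beta$ applies (the range condition $d\le\alpha<2d$ is equivalent to $0\le\beta<d$), producing $X\subset[0,1)^d$ with $\hausdim(X)\ge(2d-(d+\beta))/(2-1)=d-\beta$ avoiding $Z$ on distinct pairs. Combined with the diagonal argument from the first paragraph, this yields $(X+X)\cap Y=\emptyset$ and finishes the proof. The only real work beyond a mechanical invocation of Theorem \ref{mainTheorem} is the Minkowski dimension estimate for $Z$, which reduces to the standard fact that invertible affine maps preserve Minkowski dimension; I therefore expect no genuine obstacle beyond the bookkeeping needed to set up the pattern so that the diagonal is simultaneously handled.
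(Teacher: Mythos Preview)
Your proposal is correct and follows essentially the same approach as the paper: define $Z=Z^{(1)}\cup Z^{(2)}$, bound its lower Minkowski dimension by $d+\beta$, and apply Theorem~\ref{mainTheorem} with $n=2$. The only cosmetic differences are that the paper places the half-scale condition on the second coordinate ($Z_2=\{(x,y):y\in Y/2\}$) rather than the first, and that you supply the details (bi-Lipschitz change of variables, uncountability of $X$ for the diagonal argument) that the paper leaves implicit.
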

\begin{proof}
	Define $Z = Z_1 \cup Z_2$, where
	\[ Z_1 = \{ (x,y) : x + y \in Y \} \quad \text{and} \quad Z_2 = \{ (x,y): y \in Y/2 \}. \]
	Since $Y$ is a countable union of sets of Minkowski dimension at most $\beta$, $Z$ is a countable union of sets with lower Minkowski dimension at most $d + \beta$. Applying Theorem \ref{mainTheorem} with $n = 2$ and $\alpha = d + \beta$ produces a set $X \subset \RR^d$ with Hausdorff dimension $2d  - (d + \beta) = d - \beta$ avoiding $Z$. We claim that $X+ X$ is disjoint from $Y$. To see this, first suppose $x, y \in X$, $x \ne y$. Since $X$ avoids $Z_1$, we conclude that $x + y \not \in Y$. Suppose now that $x = y \in X$. Since $X$ avoids $Z_2$, we deduce that $X \cap (Y/2) = \emptyset$, and thus for any $x \in X$, $x + x = 2x \not \in Y$. This completes the proof. 
\end{proof}

\subsection{Subsets of Lipschitz curves avoiding isosceles triangles}

In \cite{MalabikaRob}, Fraser and the second author prove that if $\gamma\subset\RR^n$ is a simple $C^2$ curve with non-vanishing curvature, then there exists a set $S\subset\gamma$ of Hausdorff dimension $1/2$ that does not contain the vertices of an isosceles triangle. Using Theorem \ref{mainTheorem}, we generalize this result to Lipschitz curves. 

\begin{theorem}\label{C1IsoscelesThm}
Let $g\colon [0,1]\to \RR^{n-1}$ be Lipschitz. Then there is a set $X\subset [0,1]$ of Hausdorff dimension $1/2$ so that the set $\{(t,g(t))\colon t\in X\}$ not contain the vertices of an isosceles triangle.
\end{theorem}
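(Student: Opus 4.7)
The plan is to apply Theorem~\ref{mainTheorem} with $d=1$, $n=3$, and $\alpha=2$, so the conclusion yields a set of Hausdorff dimension at least $(3-2)/(3-1)=1/2$. Writing $\gamma(t)=(t,g(t))\in\RR^n$, define the pattern $Z\subset[0,1]^3$ to be the set of triples $(t_1,t_2,t_3)$ for which the three points $\gamma(t_1),\gamma(t_2),\gamma(t_3)$ form an isosceles triangle. Then $Z$ decomposes as $Z=Z^{(1)}\cup Z^{(2)}\cup Z^{(3)}$, where $Z^{(k)}$ is the zero set of the Lipschitz function equating the two distances from $\gamma(t_k)$ to $\gamma(t_i)$ and $\gamma(t_j)$ (with $\{i,j,k\}=\{1,2,3\}$); each $Z^{(k)}$ is closed and hence compact in $[0,1]^3$. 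Once I show $\lowminkdim(Z^{(k)})\leq 2$ for each $k$, Theorem~\ref{mainTheorem} yields a set $X\subset[0,1]$ with $\hausdim(X)\geq 1/2$ and avoiding $Z$, and by construction $\{(t,g(t)):t\in X\}$ then contains no vertices of an isosceles triangle.

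By symmetry it suffices to consider $Z^{(3)}=\{F=0\}$ for $F(t_1,t_2,t_3)=|\gamma(t_1)-\gamma(t_3)|^2-|\gamma(t_2)-\gamma(t_3)|^2$. Introduce the bilipschitz change of coordinates $(u,p,q)=(t_3,(t_1+t_2)/2,(t_2-t_1)/2)$, under which a direct calculation gives
\[
F(u,p,q)=4q(u-p)+H(u,p,q),\qquad H(u,p,q):=2\vec{w}(p,q)\cdot g(u)-\bigl(|g(p+q)|^2-|g(p-q)|^2\bigr),
\]
with $\vec{w}(p,q):=g(p+q)-g(p-q)$. Since $g$ is Lipschitz with some constant $L$, one has $|\vec{w}(p,q)|\leq 2L|q|$ and consequently $|H|\leq C_L|q|$ for a constant $C_L$ depending only on $L$. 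Thus $F$ is a Lipschitz perturbation of the affine function $4q(u-p)$, and every solution of $F=0$ satisfies $|u-p|\leq C_L/4$.

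To bound $\#\B^3_l(Z^{(3)})$, decompose dyadically by the scale of $|q|$. The degenerate slab $|q|\leq l$ has volume $O(l)$ and contributes $O(l^{-2})$ cubes. In each dyadic slab $|q|\sim 2^{-j}$ with $2^{-j}\geq l$, rewrite $F=0$ as $u-p=-H(u,p,q)/(4q)$. Provided $\partial F/\partial u=4q+2\vec{w}(p,q)\cdot g'(u)$ is comparable in magnitude to $|q|$ on the slab, the set of admissible $u$ for each fixed $(p,q)$ has measure $O(l\cdot 2^j)$; this gives $O(2^j)$ many $l$-intervals in $u$ per $(p,q)$-cube, which combined with the $O(2^{-j}l^{-2})$ count of $(p,q)$-cubes in the slab yields a contribution of $O(l^{-2})$ per scale. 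Summing the $O(\log(1/l))$ dyadic scales gives $\#\B^3_l(Z^{(3)})\lesssim l^{-2}\log(1/l)$, from which $\lowminkdim(Z^{(3)})\leq 2$.

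The main obstacle is the comparability assumption on $\partial F/\partial u$, which holds directly when $L<1$ (since then $|\partial F/\partial u|\geq 4|q|(1-L^2)$) but can fail when $L\geq 1$: for such $g$ the map $u\mapsto F(u,p,q)$ need not be monotone, and the $u$-interval count could a priori blow up. To handle this I expect to further subdivide each slab $|q|\sim 2^{-j}$ by the dyadic size of the symmetric-difference quotient $|\vec{w}(p,q)|/|q|\in[0,2L]$, reserving the monotonicity argument for the generic regions where this quotient is small, and controlling the bad regions by invoking Rademacher's theorem: since $\vec{w}(p,q)/(2q)\to g'(p)$ for almost every $p$ as $q\to 0$, the bad parameter set shrinks as the scale decreases, and its contribution to the covering count remains subdominant to $l^{-2}\log(1/l)$.
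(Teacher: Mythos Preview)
Your overall plan—apply Theorem~\ref{mainTheorem} with $d=1$, $n=3$, $\alpha=2$, decompose $Z$ according to which vertex is the apex, and bound the covering number of each piece by a dyadic decomposition in $|q|=|t_1-t_2|/2$—is exactly what the paper does (the paper phrases the same computation geometrically, via perpendicular bisectors of pairs of points on the graph and a dyadic decomposition in $\operatorname{dist}(I_1,I_2)$). Your argument for the case $L<1$ is correct and recovers the paper's bound $\#\B^3_l(Z^{(3)})\lesssim l^{-2}\log(1/l)$; one cosmetic point is that since $g$ is merely Lipschitz you should state the monotonicity of $u\mapsto F(u,p,q)$ via the Lipschitz bound $|H(u,p,q)-H(u',p,q)|\leq 2|\vec{w}(p,q)|\,L\,|u-u'|\leq 4L^2|q|\,|u-u'|$ rather than via $\partial F/\partial u$, which need not exist pointwise.

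The genuine gap is your treatment of $L\geq 1$. The Rademacher argument does not do what you claim. Rademacher gives $\vec{w}(p,q)/(2q)\to g'(p)$ for a.e.\ $p$, hence $|\vec{w}(p,q)|/(2|q|)\to |g'(p)|\leq L$; it does \emph{not} force this limit to be small. The ``bad'' region you need to control is $\{(p,q):|\vec{w}(p,q)|/|q|\geq 2/L\}$, i.e.\ where the difference quotient exceeds $1/L$, and when $L>1$ this can be the entire parameter space. For instance, if $g$ is piecewise linear with slopes $\pm L$, then $|g'(p)|=L$ a.e.\ and the bad set has full measure at every scale; moreover for such $g$ one can arrange $4q+2\vec{w}(p,q)\cdot g'(u)=4q(1-L^2)$ on a region of positive measure, so the monotonicity genuinely fails and there is no residual smallness to exploit.

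The fix is much simpler than Rademacher and is what the paper does first: rescale the parameter so that the Lipschitz constant becomes small. If $M$ is a Lipschitz constant for $g$, set $f(t)=g(t/(10M))-g(0)$; then $f$ is $1/10$-Lipschitz and its graph is an affine image of a piece of the graph of $g$. Run your (correct) $L<1$ argument for $f$ to obtain $X_1\subset[0,1]$ of Hausdorff dimension $1/2$ whose graph under $f$ avoids isosceles triangles, and then take $X=X_1/(10M)$. Since affine maps preserve both Hausdorff dimension and the isosceles property, this finishes the proof with no further case analysis.
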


\begin{proof}
Choose $M>0$ so that for all $s,t\in [0,1]$, we have $\Vert g(s)-g(t)\Vert \leq M|s-t|,$ where $\Vert \cdot \Vert$ denotes the Euclidean norm in $\RR^{n-1}$. Let $f\colon[0,1]\to\ \RR^{n-1}$ be given by $f(t) = g(\frac{x}{10M})-g(0)$. Then $f$ is $1/10$-Lipschitz and the graph of $f$ is contained in $[0,1]^n$. Define
\begin{align*}
Z=\{(x_1,x_2,x_3)\in [0,1]^3\colon (x_1,f(x_1)),&\ (x_2,f(x_2)),\ (x_3,f(x_3))\ \textrm{form}\\
&\textrm{the vertices of an isosceles triangle}\}.
\end{align*}

We will show that $Z$ has lower Minkowski dimension at most 2. Fix $0<\delta<1$. It suffices to show that
\begin{equation}\label{deltaCoveringZ}
\# \mathcal{B}_{\delta}^{3}(Z)\lesssim\delta^{-2}\log(1/\delta).
\end{equation}

We have
\begin{align*}
\#\mathcal{B}_{\delta}^{3}(Z) &= \sum_{I_1 \in \mathcal{B}_{\delta}^1([0,1])} \#\{ I_2,I_3\in \mathcal{B}_{\delta}^1([0,1])\colon I_1\times I_2\times I_3\in \mathcal{B}_{\delta}^{3}(Z) \}\\
&= \sum_{I_1 \in \mathcal{B}_{\delta}^1([0,1])} \sum_{k=0}^{\log(1/\delta)} \sum_{\substack{I_2 \in \mathcal{B}_{\delta}^1([0,1]) \\ \operatorname{dist}(I_1,I_2)\sim \delta 2^k }} \#\{ I_3\in \mathcal{B}_{\delta}^1([0,1])\colon  I_1\times I_2\times I_3\in \mathcal{B}_{\delta}^{3}(Z) \}.
\end{align*}
In the above expression we abuse notation slightly and say that $\operatorname{dist}(I_1,I_2)\sim \delta$ if $I_1=I_2$; this will not affect our estimates.

Note that for each $I_1 \in \mathcal{B}_{\delta}^1([0,1])$, there are roughly $(\delta 2^k)/\delta=2^k$ intervals $I_2\in  \mathcal{B}_{\delta}^1([0,1])$ with $\operatorname{dist}(I_1,I_2)\sim \delta 2^k$. Thus to establish \eqref{deltaCoveringZ}, it suffices to prove that for each $I_1 \in \mathcal{B}_{\delta}^1([0,1])$ and each $I_2\in  \mathcal{B}_{\delta}^1([0,1])$ with $\operatorname{dist}(I_1,I_2)\sim \delta 2^k$, we have
\begin{equation}\label{numberOfI3}
\#\{ I_3\in \mathcal{B}_{\delta}^1([0,1])\colon  I_1\times I_2\times I_3\in \mathcal{B}_{\delta}^{3}(Z) \}\lesssim 2^{-k}/\delta.
\end{equation}

For each distinct $p,q\in [0,1]^n$, define 
$$
H_{p,q}=\big\{z\in \RR^n\colon \big(z-\frac{p+q}{2}\big)\cdot (p-q)=0  \big\}.
$$
This is the hyperplane passing through the midpoint of $p$ and $q$ that is perpendicular to the line passing through $p$ and $q$. We will call $H_{p,q}$ the perpendicular bisector of $p$ and $q$.

Fix a choice of intervals $I_1$ and $I_2$ with $\operatorname{dist}(I_1,I_2)\sim \delta 2^k$. Let $\tilde I_1$ and $\tilde I_2$ denote the twofold dilates of $I_1$ and $I_2$, respectively. Note that if $I_3\in\mathcal{B}_\delta^1([0,1])$ with $I_1\times I_2\times I_3\in \mathcal{B}_{\delta}^{3}(Z)$, then there are points $x_j\in \tilde I_j,\ i=1,2,3$ so that 
$$
(x_3,f(x_3))\in H_{(x_1,f(x_1)), (x_2,f(x_2))}.
$$

Consider the set
$$
S_{I_1,I_2}=[0,1]^n \cap \bigcup_{\substack{x_1\in \tilde I_1\\ x_2\in \tilde I_2}}H_{(x_1,f(x_1)), (x_2,f(x_2))}.
$$
For each $x_1\in \tilde I_1$ and $x_2\in \tilde I_2$, the line passing through $(x_1,f(x_1))$ and $(x_2,f(x_2))$ makes an angle $\leq 1/10$ with the $e_1$ direction. Thus the hyperplane $H_{(x_1,f(x_1)), (x_2,f(x_2))}$ makes an angle $\leq 1/10$ with the hyperplane spanned by the $e_2,\ldots,e_n$ directions. Since $\tilde I_1$ and $\tilde I_2$ are intervals of length $\leq 3\delta$ that are $\sim \delta 2^k$ separated, $S_{I_1,I_2}$ is contained in the $\sim 2^{-k}$ neighborhood of a hyperplane that makes an angle $\leq 1/10$ with the $e_2,\ldots,e_n$ directions.  

Suppose that $x_3,x_3^\prime\in [0,1]$ satisfy
\begin{equation}\label{x3x3pContainedR}
(x_3,f(x_3))\in S_{I_1,I_2}\quad\textrm{and}\quad(x_3^\prime,f(x_3^\prime))\in S_{I_1,I_2}.
\end{equation}

Since $f$ is $1/10$-Lipschitz, we must have 
$$
|f(x_3)-f(x_3^\prime)|\leq \frac{1}{10}|x_3-x_3^\prime|.
$$
On the other hand, by \eqref{x3x3pContainedR} and the fact that $S_{I_1,I_2}$ is contained in the $\sim 2^{-k}$ neighborhood of a hyperplane that makes an angle $\leq 1/10$ with the $e_2,\ldots,e_n$ directions, we have
$$
|f(x_3)-f(x_3^\prime)|\geq 10|x_3-x_3^\prime|-O(2^{-k}).
$$

we conclude that $|x_3-x_3^\prime|\lesssim 2^{-k}$. This establishes \eqref{numberOfI3}. We conclude that \eqref{deltaCoveringZ} holds, so $Z$ has lower Minkowski dimension at most 2. 

By Theorem \ref{mainTheorem}, there is a set $X_1\subset[0,1]$ of Hausdorff dimension $1/2$ so that for each distinct $x_1,x_2,x_3\in X,$ we have $(x_1,x_2,x_3)\not\in Z$. This is precisely the statement that for each $x_1,x_2,x_3\in X$, the points $(x_1,f(x_1)),\ (x_2,f(x_2))$, and $(x_3,f(x_3))$ do not form the vertices of an isosceles triangle. To complete the proof, let $X = X/(10M)$.
\end{proof}

\bibliographystyle{amsplain}
\bibliography{FractalsAvoidingFractalSetsPaper}

\end{document}